\def\be{\begin{equation}}
\def\ee{\end{equation}}
\newtheorem*{completeness*}{Completeness property}
\newtheorem*{theorem*}{Theorem}
\newtheorem{theorem}{Theorem}
\newtheorem*{proposition*}{Proposition}
\newtheorem{lemma}{Lemma}
\newtheorem{corollary}{Corollary}
\theoremstyle{remark}
\newcommand{\nc}{\newcommand}
\newcommand{\N}{{\mathbb N}}
\nc{\supp}{\operatorname{supp}}
\nc{\dif}{\operatorname{d}} \nc{\im}{\operatorname{i}}
\nc{\Hi}{{\mathscr{H}}^\infty} \nc{\Ht}{{\mathscr{H}}^2}
\nc{\Hone}{{\mathscr{H}}^1} \nc{\ol}{\overline} \nc{\bz}{\mathbf{z}}
\nc{\bw}{\mathbf{w}} \nc{\eps}{\varepsilon}
\begin{document}
%\title[GCD sums and extreme values of the Riemann zeta function]{GCD sums and extreme values of the Riemann zeta function}
\title[Large GCD sums and extreme values of the Riemann zeta function]{Large GCD sums and extreme values of the Riemann zeta function}
\author{Andriy Bondarenko}
\address{Department of Mathematical Analysis\\ Taras Shevchenko National University of Kyiv\\
Volody- myrska 64\\ 01033 Kyiv\\ Ukraine}
\address{Department of Mathematical Sciences \\ Norwegian University of Science and Technology \\ NO-7491 Trondheim \\ Norway}

\email{andriybond@gmail.com}
\author[Kristian Seip]{Kristian Seip}
\address{Department of Mathematical Sciences \\ Norwegian University of Science and Technology \\ NO-7491 Trondheim \\ Norway}
\email{seip@math.ntnu.no}
\thanks{Research supported by Grant 227768 of the Research Council of Norway. }
\subjclass[2010]{11M06, 11C20}
\maketitle

\begin{abstract}
It is shown that  
the maximum of $|\zeta(1/2+it)|$ on the interval $T^{1/2}\le t \le T$ is at least 
$\exp\left((1/\sqrt{2}+o(1)) \sqrt{\log T \log\log\log T/\log\log T}\right)$. Our proof uses Soundararajan's resonance me- thod and a certain large GCD sum. The method of proof shows that the absolute constant $A$ in the inequality
\[
\sup_{1\le n_1<\cdots < n_N} \sum_{k,{\ell}=1}^N\frac{\gcd(n_k,n_{\ell})}{\sqrt{n_k
n_{\ell}}}  \ll N  \exp\left(A\sqrt{\frac{\log N \log\log\log N}{\log\log N}}\right), \]
established in a recent paper of ours, cannot be taken smaller than $1$. 
\end{abstract}

\section{Introduction}

This paper establishes the following new estimate for extreme values of the Riemann zeta function $\zeta(s)$ on the half-line.

\begin{theorem} \label{extreme}
Let $0<\beta<1$ be given and let $c$ be a positive number less than 
$\sqrt{\min(1/2, 1-\beta)}$. If $T$ is sufficiently large, then there exists a $t$, $T^{\beta} \le t \le T$, such that
\[ \left|\zeta\Big(\frac{1}{2}+it\Big)\right| \ge \exp\left(c\sqrt{\frac{\log T \log\log\log T}{\log\log T}}\right). \]
\end{theorem}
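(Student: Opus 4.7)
The plan is to apply Soundararajan's resonance method with a Dirichlet polynomial built from a set $\mathcal{M}$ and weights $r$ that realise a large GCD sum. Concretely, fix $0 < \theta < \min(1/2,\,1-\beta)$ arbitrarily close to the minimum and construct a finite set $\mathcal{M}$ of positive integers, each at most $T^{\theta}$, together with a nonnegative weight $r \colon \mathcal{M} \to [0,\infty)$ such that, writing $N = |\mathcal{M}|$,
\[
\sum_{k,\ell \in \mathcal{M}} r(k)\,r(\ell)\,\frac{\gcd(k,\ell)}{\sqrt{k\ell}} \;\ge\; \|r\|_{2}^{2}\,\exp\!\left(\bigl(1-o(1)\bigr)\sqrt{\frac{\log N\,\log\log\log N}{\log\log N}}\right),
\]
and additionally the ``divisor part'' $\sum_{k\mid\ell} r(k)r(\ell)\sqrt{k/\ell}$ captures the full GCD sum up to a constant factor. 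This latter property is arranged by taking $\mathcal{M}$ to be contained in the divisors of a single highly composite integer, so that the bulk of the pairs $(k,\ell)$ contributing to the GCD sum are actually comparable under divisibility.

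Set $R(t) = \sum_{n\in\mathcal{M}} r(n)\,n^{-it}$ and pick a smooth nonnegative bump $\Phi$ supported in $[T^{\beta},T]$ and equal to $1$ on a subinterval of comparable length. Since
\[
\max_{t\in\supp\Phi}\,\left|\zeta\!\left(\tfrac{1}{2}+it\right)\right| \;\ge\; \frac{|M_{1}|}{M_{2}}, \qquad M_{1} := \int \zeta\!\left(\tfrac{1}{2}+it\right)|R(t)|^{2}\Phi(t)\,\dif t, \quad M_{2} := \int |R(t)|^{2}\Phi(t)\,\dif t,
\]
it suffices to estimate these two integrals. Expanding $|R(t)|^{2} = \sum_{k,\ell} r(k)r(\ell)(\ell/k)^{it}$, the diagonal $k=\ell$ contributes $M_{2}\sim\hat\Phi(0)\,\|r\|_{2}^{2}$, off-diagonal terms being suppressed by the rapid decay of $\hat\Phi$ at the nonzero frequencies $\log(\ell/k)$. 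For $M_{1}$, substitute a truncated expansion $\zeta(\tfrac{1}{2}+it) = \sum_{m\le X} m^{-1/2-it} + E(t)$ valid on $[T^{\beta},T]$, expand the resulting triple product, and isolate the resonant triples $\ell = m k$ (so $k\mid\ell$ and $m=\ell/k$); this produces the main term
\[
M_{1} \;\sim\; \hat\Phi(0)\sum_{\substack{k\mid\ell \\ k,\ell\in\mathcal{M}}} r(k)\,r(\ell)\,\sqrt{k/\ell}.
\]
The constraint $\theta<\min(1/2,\,1-\beta)$ is precisely what is needed to absorb the non-resonant triples (via a mean fourth-moment bound for $R$ on $[T^{\beta},T]$) and the error term $E(t)$.

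Taking the quotient and using the comparison between the divisor sum and the full GCD sum, the ratio $|M_{1}|/M_{2}$ is at least
\[
\exp\!\left(\bigl(\sqrt{\theta}-o(1)\bigr)\sqrt{\frac{\log T\,\log\log\log T}{\log\log T}}\right),
\]
since $\log N\sim\theta\log T$ and $\log\log N\sim\log\log T$. Letting $\theta$ tend to $\min(1/2,\,1-\beta)$ delivers the theorem for any admissible $c$. The two principal technical obstacles are the following: first, controlling the error in $M_{1}$ on the range $[T^{\beta},T]$, where mean value estimates for Dirichlet polynomials are weaker than on the canonical range $[T,2T]$; this is exactly what forces the extra condition $\theta<1-\beta$ in the regime $\beta>1/2$. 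Second, verifying that the extremizer for the GCD sum problem also has a divisor sum of the same order of magnitude as the full GCD sum; this relies on the precise multiplicative structure of $\mathcal{M}$ and is what connects the GCD sum estimate to the resonance-method main term.
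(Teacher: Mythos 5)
You have the right global strategy: Soundararajan's resonance method driven by a resonator built from a GCD-sum extremizer, with the constraint $\theta<\min(1/2,1-\beta)$ eventually producing the exponent $\sqrt{\theta}$ in the final bound. But the sketch has a gap at the very first step that is not a matter of detail. You require the resonating set $\mathcal{M}$ to consist of integers \emph{each at most $T^{\theta}$}. That is incompatible with the GCD-sum input you need. Any set achieving $\Lambda(N)\ge\exp\bigl((\gamma-o(1))\sqrt{\log N\log_3 N/\log_2 N}\bigr)$ must contain integers of size roughly $\exp\bigl(c\,\log N\log_2 N/\log_3 N\bigr)$: in the construction of Theorem~\ref{gcdexample}, a typical element of $\mathcal{M}$ is square-free with about $a\log N/\log_3 N$ prime factors, each at least $e\log N\log_2 N$, so its logarithm is of order $\log N\log_2 N/\log_3 N$, which dwarfs $\log(T^\theta)\asymp\log N$. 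If you force all elements into $[1,T^{\theta}]$ you are back to Soundararajan's original framework and can only achieve the exponent $\sqrt{\log T/\log_2 T}$, not the enhanced one with the extra $\log_3$-factor. The correct constraint is on the \emph{cardinality} of the resonator, not on the \emph{size} of its elements: the resonance integrals depend only on the multiplicative gaps $\log(m/n)$, so one replaces the huge integers in $\mathcal{M}$ by a multiplicative $T^{-1}$-net $\mathcal{M}'$ (take the smallest element of $\mathcal{M}$ in each dyadic-type interval $[(1+T^{-1})^j,(1+T^{-1})^{j+1})$) and aggregates the weights, $r(m)^2:=\sum_{n\in\mathcal{M},\,|n/m-1|\le T^{-1}(\log T)^2}f(n)^2$, so that $|\mathcal{M}'|\le N=[T^{\kappa}]$. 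This netting step and the aggregated weights are the technical heart of the proof (following Aistleitner), and your sketch contains no analogue of them.

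Two smaller issues. First, your smoothing function is a bump supported in $[T^\beta,T]$; the resonance argument repeatedly drops positive terms, which requires $\widehat{\Phi}\ge 0$, and a generic compactly supported bump does not have a nonnegative transform. The paper instead takes a Gaussian on the whole line and controls the range $|t|\le T^\beta$ separately (Lemma~\ref{lem4}, a stationary-phase estimate), which is exactly where the condition $\kappa\le 1-\beta$ enters. Second, the claim that one must verify that the divisor sum $\sum_{k\mid\ell}r(k)r(\ell)\sqrt{k/\ell}$ captures the \emph{full} GCD sum up to a constant is a red herring: the paper only needs the one-sided inequality that the GCD sum dominates the divisor sum (trivially true, all terms positive), and then Theorem~\ref{gcdexample} is engineered so that the divisor sum alone already has the target size $A_N$; nothing about ``divisors of a single highly composite integer'' enters. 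Finally, one also needs to check that the contribution of small divisors $d\le n/N^{\ve}$ to $A_N$ is negligible (Lemma~\ref{lem3}), since the approximating Dirichlet polynomial for $\zeta$ is truncated at $T$; your sketch does not address this truncation loss.
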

%Here $\mu(\sigma)$ is as usual the infimum of those $\eta$ such that $|\zeta(\sigma+it)|\ll |t|^{\eta}$ (see \cite{B} for the best %known estimate $\mu(1/2)\le 53/342$). 
The best lower estimate for extreme values of \mbox{$|\zeta(1/2+it)|$} known previously was obtained in 2008 by Soundararajan \cite{S} who proved that
\[  \left|\zeta\Big(\frac{1}{2}+it\Big)\right| \ge \exp\left((1+o(1))\sqrt{\frac{\log T }{\log\log T}}\right)\]
holds for some $t$, $T\le t \le 2T$, if $T$ is large enough. In 1977, Montgomery \cite{M} had proved, assuming the Riemann Hypothesis, that 
there exist arbitrarily large $t$ such that \[ \left|\zeta\Big(\frac{1}{2}+it\Big)\right| \gg  \exp\left(c \sqrt{\frac{\log t }{\log\log t}}\right)\]
with $c=1/20$. This result was proved unconditionally at the same time by Balasubramanian and Ramachandra with a larger value of $c$ (see \cite{BR} and \cite{S}). Earlier milestones were bounds of the order of magnitude $\exp\Big(c(\varepsilon) (\log t)^{1/2-\varepsilon}\Big)$ and $\exp\Big(c\sqrt{\log t}/\log\log t\Big)$ obtained respectively by Titschmarsh in 1928 \cite{Ti} and Levinson in 1972 \cite{L}. We refer to Bourgain's recent paper \cite{B} for the best known upper bound $|\zeta(1/2+it)|\le t^{13/84+\varepsilon}$.

The proof of Theorem~\ref{extreme} will use the resonance method introduced by Soundararajan \cite{S}. The main new ingredient of the proof is a certain large greatest common divisor (GCD) sum related to our recent work \cite{BS}. In the latter paper,  we found that there exists an absolute constant $A$ less than 7 such that
\begin{equation}\label{gcdb}
\sum_{k,\ell=1}^N\frac{\gcd(n_k,n_{\ell})}{\sqrt{n_k
n_{\ell}}} \le N \exp\left(A\sqrt{\frac{\log N \log\log\log N}{\log\log N}}\right)
\end{equation}
for arbitrary integers $1\le n_1<\cdots <n_N$ and $N$ sufficiently large.
The following result is essential for the proof of Theorem~\ref{extreme} and will as well lead to the conclusion that \eqref{gcdb} is optimal in the sense that  it does not hold if $A<1$.
\begin{theorem}\label{gcdexample} Suppose that $0<\gamma<1$ is given. 
Then for every sufficiently large integer $N$ there exist integers $1\le n_1<\cdots < n_N$ and positive numbers $c_1,..., c_N$ such that
\begin{equation}\label{gcdc}
\sum_{k,{\ell}=1}^N c_k c_{\ell} \frac{\gcd(n_k,n_{\ell})}{\sqrt{n_k
n_{\ell}}} \ge \Big(\sum_{j=1}^N c_j^2\Big)\cdot \exp\left(\gamma\sqrt{\frac{\log N \log\log\log N}{\log\log N}}\right).
\end{equation}
\end{theorem}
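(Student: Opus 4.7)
The plan is to construct the family $\{n_k\}$ and coefficients $\{c_k\}$ so that the Rayleigh quotient
\[ \frac{\sum_{k,\ell} c_k c_\ell \gcd(n_k,n_\ell)/\sqrt{n_k n_\ell}}{\sum_j c_j^2} \]
factorises as an Euler product over primes, and then to extract the lower bound through an optimisation over the per-prime factors. Starting from the identity
\[ \frac{\gcd(m,n)}{\sqrt{mn}} = \prod_p p^{-|v_p(m) - v_p(n)|/2}, \]
one fixes nonnegative integer bounds $K_p$ (only finitely many nonzero), lets the $n_k$'s run over the integers $\prod_p p^{a_p}$ with $0 \le a_p \le K_p$, and sets $c_k = \prod_p \phi_p(v_p(n_k))$ for suitable positive weight functions $\phi_p$ supported on $\{0,1,\ldots,K_p\}$. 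The Rayleigh quotient above then equals
\[ \prod_p \frac{\langle T_p \phi_p, \phi_p\rangle}{\langle \phi_p, \phi_p\rangle}, \]
where $T_p$ denotes the $(K_p+1)\times(K_p+1)$ Toeplitz matrix $(p^{-|a-b|/2})_{0 \le a, b \le K_p}$. Choosing each $\phi_p$ to be the Perron eigenvector of $T_p$, the factorisation becomes $\prod_p \lambda_p$, the product of the principal eigenvalues, while $N = \prod_p (K_p+1)$.

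The problem thereby reduces to selecting a finite set of primes and the corresponding exponent bounds $K_p$ to maximise $\sum_p \log \lambda_p$ subject to the budget constraint $\sum_p \log(K_p+1) = \log N$. Taking $K_p = 1$ for all primes $p \le y$ gives $\lambda_p = 1 + p^{-1/2}$ and, at $y \asymp (\log N)(\log\log N)$, reproduces the Soundararajan-type exponent $(2/\sqrt{\log 2} + o(1))\sqrt{\log N/\log\log N}$, which falls short of the target $\gamma\sqrt{(\log N)(\log\log\log N)/\log\log N}$ once $\log\log\log N$ becomes large. To extract the extra $\sqrt{\log\log\log N}$ factor, one raises $K_p$ above $1$ on a carefully selected sub-range of primes, using that $\lambda_p$ approaches $(1 + p^{-1/2})/(1 - p^{-1/2})$ as $K_p \to \infty$, essentially doubling the per-prime logarithmic gain $\log \lambda_p$. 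A delicate allocation of $K_p$'s across prime scales, together with a matched choice of $y$, should yield an exponent of the prescribed form with $\gamma$ arbitrarily close to $1$.

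The main obstacle is precisely this quantitative optimisation. It requires sharp estimates on the Toeplitz eigenvalues $\lambda_{K_p}(p^{-1/2})$ for a broad range of $p$ and $K_p$, combined with prime-sum asymptotics on the selected ranges, so as to balance per-prime gain against budget cost precisely enough to extract the $\log\log\log N$ factor and push the constant $\gamma$ up to any value less than $1$. Once this balance is achieved, the proof is completed by ordering the chosen $y$-smooth integers as $n_1 < \cdots < n_N$ and reading off the coefficients $c_k$ as the values $\prod_p \phi_p(v_p(n_k))$ of the per-prime Perron eigenvectors at the corresponding exponent vectors.
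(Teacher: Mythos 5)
Your plan cannot succeed, and the obstruction is structural, not merely quantitative. In your framework the $n_k$ exhaust the full product set $\{\prod_p p^{a_p}: 0\le a_p\le K_p\}$, and the budget constraint $\prod_p(K_p+1)=N$ forces the set $S=\{p: K_p\ge 1\}$ to satisfy $|S|\le \log N/\log 2$. But for every $K$ the Toeplitz Perron eigenvalue obeys $\log\lambda(p^{-1/2},K)\le \log\frac{1+p^{-1/2}}{1-p^{-1/2}}\le 2p^{-1/2}+O(p^{-3/2})$, so
\[ \sum_p\log\lambda_p \;\le\; (2+o(1))\sum_{p\in S}p^{-1/2}\;\le\;(2+o(1))\sum_{p\le y}p^{-1/2} \quad\text{with } \pi(y)=|S|\le \tfrac{\log N}{\log 2}, \]
and hence $\sum_p\log\lambda_p\ll \sqrt{\log N/\log\log N}$, with an absolute implied constant, uniformly in the allocation of the $K_p$. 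No ``delicate allocation across prime scales'' can break this barrier: the whole Euler-product scheme, even with the optimal per-prime eigenvectors, is capped at the Soundararajan exponent and cannot produce the extra $\sqrt{\log\log\log N}$ factor that the theorem demands.

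The paper escapes this ceiling precisely by abandoning the full-product-set structure. The key mechanisms you are missing are: (i) the primes are taken in a shifted window $e\log N\log_2 N < p \le \log N\exp((\log_2 N)^{\gamma})\log_2 N$ rather than an initial segment, with a hand-chosen multiplicative weight $f(p)$ (not a Perron eigenvector) tuned so that $\sum_{p\in P}f(p)p^{-1/2}$ carries a $\sqrt{\log_3 N}$ gain; and (ii) the support set $\mathcal{M}$ is \emph{not} the full set of square-free products of $P$ (which would have cardinality $2^{|P|}\gg N$), but the divisor-closed subset of integers having at most $a\log N/(k^2\log_3 N)$ prime factors in each dyadic block $P_k$ — this is what brings $|\mathcal{M}|$ down to $\le N$. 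The technical heart of the argument is then Lemma~\ref{lem2}, which shows that throwing away the complement of $\mathcal{M}$ only costs a factor $1+o(1)$ in the Rayleigh quotient, even though $\mathcal{M}$ is a tiny fraction of the ambient product set. Because $\mathcal{M}$ is not a product set, the Rayleigh quotient does not factorise over primes, and the Toeplitz eigenvalue analysis you propose simply does not apply. You would need to replace the eigenvalue optimisation with a truncation argument of the Lemma~\ref{lem2} type, and that is where the genuine work of the proof lies.
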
 
%We will see later that a consequence of \eqref{gcdb} seems to be that the potential of Soundararajan's method for finding lower bounds for %extreme values of $\zeta(1/2+it)$ has essentially been exhausted.

Theorem~\ref{gcdexample} belongs to the study of GCD sums of the form
\begin{equation}\label{gcda}
\sum_{k,\ell=1}^N\frac{(\gcd(n_k,n_{\ell}))^{2\sigma}}{(n_k
n_{\ell})^\sigma}
\end{equation}
%began with a paper of G\'{a}l \cite{G}, solving 
and the associated matrices $\big(\frac{(\gcd(n_k,n_{\ell}))^{2\sigma}}{(n_k
n_{\ell})^\sigma}\big)$ for $\sigma>0$. This topic begins with G\'{a}l's theorem \cite{G} which asserts that $CN(\log \log N)^2$ is an optimal upper bound for \eqref{gcda} when $\sigma=1$, with $C$ an absolute constant independent of $N$ and the distinct positive integers $n_1,...,n_N$. In our case $\sigma=1/2$,
the first estimate was found by Dyer and Harman \cite{DH}, showing that the sum in \eqref{gcdb} is bounded by $\exp(C \log N/\log\log N)$.  The better bound $ \exp(C\sqrt{\log N \log\log N})$ was found by Aistleitner, Berkes, and Seip in  \cite{ABS} by a method that also led to a full solution when $1/2<\sigma <1$. For further details on the history of GCD sums of the form \eqref{gcda}, we refer to \cite{BS} or the recent paper \cite{LR} which gives the optimal constant $C$ in G\'{a}l's theorem and a simplified proof in the range $1/2<\sigma\le 1$.

The initial motivation for estimating GCD sums originated in Koksma's work in the metric theory of diophantine approximation \cite{K}. See Chapter~3 of Harman's book \cite{H} for a comprehensive treatment of applications in this area. In recent years, estimates of GCD sums have led to solutions to some longstanding problems regarding the almost everywhere convergence and boundedness of systems of dilated functions \cite{ABS, LR}. 
The explicit link to extreme values of $\zeta(s)$ came into light in Aistleitner's recent paper \cite{A}, which combined estimates for certain large GCD sums with Hilberdink's version of the resonance method \cite{H}. This led Aistleitner to a new proof of Montgomery's $\Omega$-results for $\zeta(\sigma+it)$ in the range $1/2<\sigma<1$ \cite{M}. 

Inspired by Aisleitner's work, the proof of Theorem~\ref{gcdexample} relies on our understanding of extremal sets of square-free numbers as described in \cite{BS}. We are not able to identify an actual extremal set, so that, strictly speaking, we are dealing only with ``nearly maximal'' GCD sums. But we come sufficiently close to reach the desired conclusion that our estimate from \cite{BS} is optimal, and we are thus led to  what appears to be an essentially optimal ``resonating'' Dirichlet polynomial in Soundararajan's method.

The version of the resonance method used in this paper can also be employed to prove existence of large values of $|\zeta(\sigma+it)|$ in the range $1/2 < \sigma < 1$. In particular, we may offer a new approach to Montgomery's result  \cite {M} which asserts that there exists a positive constant $c$ depending on $\sigma$ such that for all sufficiently large $T$, we have   
\begin{equation} \label{lowerM} \left|\zeta\big(\sigma+it\Big)\right| \ge \exp\left(c \frac{(\log T)^{1-\sigma} }{(\log\log T)^\sigma}\right)\end{equation}
for some $t$ in the range $T^\beta\le t \le T$ and some $\beta$, $0<\beta<1$. 
This is of some interest because the best exponent previously known for Montgomery's estimates, namely $c=c_0/(1-\sigma)$ for some $c_0=c_0(\sigma) < 0.17$ \cite{RS}, can be improved notably by our method. Indeed, we may replace this $c$ by a constant $\nu_\sigma$ enjoying the following asymptotic estimates:
\begin{equation}\label{montgomery} \nu_\sigma=\begin{cases} (1+o(1))(1-\sigma)^{-1}, & \sigma \nearrow 1, \\
(1/\sqrt{2}+o(1))\sqrt{|\log(2\sigma-1)|}, & \sigma\searrow 1/2. \end{cases}\end{equation}
The precise statement is that whenever $c<\nu_\sigma$, there exists a $\beta$, 
$1-\sigma\le  \beta < 1$ such that \eqref{lowerM} holds for some $t$, $T^\beta\le t \le T$.
This result gives a ``smooth'' transition between Montgomery's estimates and the respective endpoint cases represented by Levinson's classical bound for $\sigma=1$ \cite{GS, L} and our Theorem~\ref{extreme}; we also note that a precise conjecture from Lamzouri's paper \cite[Remark 2, p. 5454]{La} is consistent with our asymptotic estimate for $\nu_\sigma$ being sharp when $\sigma\nearrow 1$. The proof of \eqref{montgomery}, to be given in a forthcoming publication along with some additional applications of our version of the resonance method, shows that the smoothness of the numbers involved in an essentially optimal resonator decreases in an interesting and nontrivial way when $\sigma$ decreases from 1 to 1/2. 
 
 We will prove Theorem~\ref{gcdexample} in the next section, where we also show how this result leads to the failure of \eqref{gcdb} for $A<1$. Section~\ref{proofR} gives the proof of Theorem~\ref{extreme}, as well as a remark indicating that \eqref{gcdb} is a severe obstacle for further improvements using the resonance method. 
 % We sketch the main new ingredients needed to prove \eqref{montgomery} in the final Section~\ref{final}. 
 %In the final Section~\ref{Montgomery}, we explain briefly how our method of proof can be used to sharpen Montgomery's estimates for the %maxima of $|\zeta(\sigma+i t)|$ for every fixed $\sigma$, $1/2<\sigma<1$; we obtain the same order of magnitude for $\Omega_+(\log |
 %\zeta(\sigma+i t)|)$ as in \cite{M} but with a much improved estimate for the asymptotic behavior in the respective endpoint cases when $\sigma%\searrow 1/2$ and $\sigma\nearrow 1$. The details 
 
\section{Proof of Theorem~\ref{gcdexample}}\label{secttwo}

We begin by fixing a large integer $N$. To simplify the writing, we will use the short-hand notation $\log_2 N:=\log\log N$, $\log_3 N:=\log\log\log N$, and $\log_4 N:=\log\log\log \log N$.

We will construct $c_j$ and $n_j$ satisfying~\eqref{gcdc} using a suitable multiplicative function. To this end, we let $P$ be the set of all primes $p$ such that 
\[ e\log N\log_2 N< p \le \log N\exp( (\log_2 N)^{\gamma})\log_2 N. \]
We will choose $c_j=f(n_j)$ for suitable $n_j$, where $f(n)$ is the multiplicative function supported on the set of square-free numbers with
$$
f(p):=\sqrt{\frac{\log N \log_2N}{\log_3 N}}\frac{1}{\sqrt{p}(\log p-\log_2N-\log_3N)}
$$
for $p$ in $P$ and $f(p)=0$ otherwise. 
The idea to consider this function comes from our choice of weights $w_j$ in the proof of \cite[Lemma 4]{BS}. These weights played a crucial role in an application of the Cauchy--Schwarz inequality. 

We find that
\begin{align*}
\sum_{k, \ell=1}^Nf(n_k)f(n_{\ell})\frac{\gcd(n_k,n_{\ell})}{\sqrt{n_k
n_{\ell}}}& \ge\sum_{k=1}^N\frac{f(n_k)}{\sqrt{n_k}}\sum_{n_{\ell}|n_k}f(n_{\ell})\frac{\gcd(n_k,n_{\ell})}{\sqrt{n_{\ell}}} \\
& =\sum_{k=1}^N\frac{f(n_k)}{\sqrt{n_k}}\sum_{n_{\ell}|n_k}f(n_{\ell})\sqrt{n_{\ell}}. \end{align*}
This estimate leads us to consider the quantity
\begin{equation}\label{an}
A_N:=\frac{1}{\sum_{i\in\N}f(i)^2}\sum_{n\in\N}\frac{f(n)}{\sqrt{n}}\sum_{d|n}f(d)\sqrt{d}.
\end{equation}
Since $f$ is a multiplicative function, we obtain by induction that  
\begin{equation} \label{AN}
A_N=\prod_{p\in P}\frac{1+f(p)^2+f(p)p^{-1/2}}{1+f(p)^2}.
\end{equation}

We now claim that Theorem~\ref{gcdexample} is an immediate consequence of the following two lemmas.
\begin{lemma}
\label{lem1}
We have that
\[
A_N\ge \exp\left((\gamma+o(1))\sqrt{\frac{\log N \log_3 N}{\log_2 N}}\right)
\]
when $N\to \infty$.
\end{lemma}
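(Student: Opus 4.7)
The plan is to take the logarithm of the product \eqref{AN}, linearize each factor using $\log(1+x)=x+O(x^{2})$, show that all error terms are of smaller order, and reduce the estimate to an application of Mertens' theorem on sums of reciprocals of primes.

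First, I would rewrite \eqref{AN} as
\[
\log A_N=\sum_{p\in P}\log\!\left(1+\frac{f(p)\,p^{-1/2}}{1+f(p)^{2}}\right).
\]
The definition of $P$ guarantees $\log p -\log_2 N-\log_3 N>1$, so the crude bounds
\[
f(p)^{2}\le \frac{\log N\log_2 N}{\log_3 N}\cdot\frac{1}{p}\le\frac{1}{e\log_3 N},\qquad
f(p)\,p^{-1/2}\le \frac{1}{\sqrt{e^{2}\log N\log_2 N\log_3 N}}
\]
hold uniformly for $p\in P$; both tend to $0$. Consequently the inequality $\log(1+x)\ge x-x^{2}$ may be applied to every term and the factors $1/(1+f(p)^{2})$ may be expanded as $1-O(f(p)^{2})$, giving
\[
\log A_N \ge \sum_{p\in P} \frac{f(p)}{\sqrt{p}} - O\!\left(\frac{1}{\log_3 N}\sum_{p\in P}\frac{f(p)}{\sqrt{p}} + \sum_{p\in P}\frac{f(p)^{2}}{p}\right).
\]
A routine bound using $\sum_{p>x}p^{-2}=O(1/(x\log x))$ shows the second sum in the error term is $O(1/(\log_2 N\log_3 N))=o(1)$, while the first part of the error is dominated by the main term times $1/\log_3 N$, so it too will be absorbed into the $o(1)$ factor.

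The heart of the argument is then the evaluation of
\[
S_N:=\sum_{p\in P} \frac{f(p)}{\sqrt{p}}=\sqrt{\frac{\log N\log_2 N}{\log_3 N}}\sum_{p\in P}\frac{1}{p\bigl(\log p-\log_2 N-\log_3 N\bigr)}.
\]
By Mertens' theorem $\sum_{p\le x}1/p=\log\log x+M+O(1/\log x)$, partial summation turns the inner sum into
\[
\int_{e\log N\log_2 N}^{\log N\,\exp((\log_2 N)^{\gamma})\log_2 N}\!\frac{dx}{x\log x\,(\log x-\log_2 N-\log_3 N)}+O\!\left(\frac{1}{\log_2 N}\right),
\]
and substituting first $t=\log x$ and then $u=t-\log_2 N-\log_3 N$ reduces this to
\[
\int_{1}^{(\log_2 N)^{\gamma}}\frac{du}{u\,(u+\log_2 N+\log_3 N)}
=\frac{1+o(1)}{\log_2 N}\int_{1}^{(\log_2 N)^{\gamma}}\frac{du}{u}
=(\gamma+o(1))\,\frac{\log_3 N}{\log_2 N},
\]
using $u\le(\log_2 N)^{\gamma}=o(\log_2 N)$ to justify replacing $u+\log_2 N+\log_3 N$ by $\log_2 N(1+o(1))$.

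Multiplying by the prefactor yields $S_N=(\gamma+o(1))\sqrt{\log N\log_3 N/\log_2 N}$, and combining this with the linearization step completes the proof. The technical point demanding the most care is controlling the error in the Mertens approximation uniformly across the range of $P$; since the primes in $P$ all satisfy $\log p\ge\log_2 N$, the $O(1/\log x)$ error in Mertens contributes at most $O(1/\log_2 N)$ to the inner sum, which is negligible compared with the main term of order $\log_3 N/\log_2 N$. This error management is the main obstacle, but it is routine once the substitution above clarifies the scales.
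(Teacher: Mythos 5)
Your proof is correct and follows essentially the same route as the paper's: both expand the factor in \eqref{AN} to extract the main term $\sum_{p\in P}f(p)/\sqrt p$ using the uniform smallness of $f(p)$ on $P$ (the paper states $f(p)<(\log_3 N)^{-1/2}$; you derive the equivalent bound $f(p)^2\le 1/(e\log_3 N)$), and then both evaluate that sum by replacing it with the integral $\int \frac{dx}{x\log x(\log x-\log_2 N-\log_3 N)}$ and performing the substitution $u=\log x-\log_2 N-\log_3 N$ to land at $(\gamma+o(1))\log_3 N/\log_2 N$. Your version is slightly more explicit about the error bookkeeping in the linearization (tracking $\sum f(p)^3/\sqrt p$ and $\sum f(p)^2/p$ separately), which the paper compresses into a single $(1+o(1))$ factor, but the underlying argument is identical.
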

In the second lemma, we use the following terminology: A set of positive integers $\mathcal{M}$ is said to be divisor closed if $d$ is in $\mathcal{M}$ whenever $m$ is in $\mathcal{M}$ and $d$ divides $m$.
\begin{lemma}
\label{lem2}
There exists a divisor closed set of integers $\mathcal{M}$ of cardinality at most $N$ such that
\begin{equation}
\label{i2}
\frac{1}{\sum_{i\in\N}f(i)^2}\sum_{n\in\N,n\not\in\mathcal{M}}\frac{f(n)}{\sqrt{n}}\sum_{d|n}f(d)\sqrt{d}=o(A_N),\quad N\to\infty.
\end{equation}
\end{lemma}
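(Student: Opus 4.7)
The strategy is to recast the tail estimate as a probability bound under a natural product measure. Writing $w_p := f(p)^2 + f(p)/\sqrt{p}$ and using the multiplicativity of $f$, one has for every squarefree $n$ with prime factors in $P$ that
\[
\frac{f(n)}{\sqrt n}\sum_{d\mid n}f(d)\sqrt d \;=\; \prod_{p\mid n}w_p,
\]
and summing yields $\prod_{p\in P}(1+w_p)$. Let $\mathbb{P}$ denote the product measure on subsets of $P$ (identified with squarefree integers with prime factors in $P$) under which each $p\in P$ is selected independently with probability $q_p:=w_p/(1+w_p)$. Then the identity above rewrites the left side of \eqref{i2} as $A_N\cdot \mathbb{P}(n\notin\mathcal{M})$, and it suffices to exhibit a divisor-closed $\mathcal{M}\subset\mathbb{N}$ with $|\mathcal{M}|\le N$ and $\mathbb{P}(n\notin\mathcal{M})=o(1)$.

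To construct $\mathcal{M}$, I partition $P$ into a large but finite family of intervals $P_j$ by the variable $v(p):=\log p-\log_2 N-\log_3 N\in[1,(\log_2 N)^\gamma]$. The intervals are chosen thin enough that the Riemann sums approximate the corresponding integrals up to a factor $1+o(1)$ (for example, geometric with ratio $1+o(1)$), yet not so thin that the means $\mathbb{E}[\omega_j]$ fail to diverge with $N$. Setting $k_j:=(1+\epsilon_N)\mathbb{E}[\omega_j]$ for a sequence $\epsilon_N\to 0$ to be chosen, I define
\[
\mathcal{M}:=\bigl\{n\in\mathbb{N}:\,n\text{ squarefree with prime factors in }P,\ \omega_j(n)\le k_j\text{ for all }j\bigr\},
\]
where $\omega_j(n)$ counts the prime factors of $n$ lying in $P_j$. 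Each divisor of such an $n$ is again squarefree with factors in $P$ and has no more prime factors in any band, so $\mathcal{M}$ is divisor-closed in $\mathbb{N}$.

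For the size of $\mathcal{M}$, using $\log\binom{|P_j|}{\le k_j}\le k_j\log(e|P_j|/k_j)$ together with the prime number theorem asymptotics $|P_j|\sim\log N\cdot e^{v_j}\,|I_j|$ and $\mathbb{E}[\omega_j]\sim(\log N)/(\log_3 N\,v_j^2)\cdot|I_j|$, the quantity $\log|\mathcal{M}|$ is essentially
\[
\int_1^{(\log_2 N)^\gamma}\frac{\log N}{\log_3 N\,v^2}\bigl(v+2\log v+\log_4 N\bigr)\,dv.
\]
The dominant contribution $\frac{\log N}{\log_3 N}\int_1^{(\log_2 N)^\gamma}dv/v=\gamma\log N$ is strictly less than $\log N$ precisely because $\gamma<1$, while the remaining terms are $o(\log N)$; hence $|\mathcal{M}|\le N$ for $N$ large. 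The probability estimate follows from Chernoff's inequality: $\mathbb{P}(\omega_j>(1+\epsilon_N)\mathbb{E}[\omega_j])\le\exp(-c\epsilon_N^2\mathbb{E}[\omega_j])$, and because $\mathbb{E}[\omega_j]\to\infty$ uniformly in $j$ (the smallest mean being at least of order $\log N/(\log_3 N(\log_2 N)^\gamma)$), one may choose $\epsilon_N\to 0$ so slowly that these tail bounds sum over all bands to $o(1)$, giving $\mathbb{P}(n\notin\mathcal{M})=o(1)$.

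The main obstacle is the tightness of the cardinality bound: the integral governing $\log|\mathcal{M}|$ returns exactly $\gamma\log N$, so the construction saturates the available budget of $\log N$ as $\gamma\nearrow 1$. This explains why the hypothesis $\gamma<1$ is intrinsic to the method and simultaneously confirms that the resulting GCD sums are essentially optimal within this family of constructions. A secondary technical matter — the passage from sums over primes to integrals and from the discrete partition into bands to the Riemann-sum approximation — is absorbed by taking the partition fine enough that all such error terms contribute only $o(1)$ to the final estimate.
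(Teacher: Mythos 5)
Your argument is correct, and at its core it is the same decomposition the paper uses: partition $P$ into ``bands'' in the variable $v(p)=\log p-\log_2N-\log_3N$, define $\mathcal{M}$ by capping the number of prime factors in each band near its expected value, bound $|\mathcal{M}|$ via $\log\binom{m}{\le n}\le n\log(em/n)$, and control the excluded mass by an exponential moment (Chernoff) argument band by band. What you do differently is twofold, and both differences are worth noting. First, you recognise the quantity explicitly as $A_N\,\mathbb{P}(n\notin\mathcal{M})$ under the product measure with $q_p=w_p/(1+w_p)$; the paper never phrases it this way, instead factoring $A_N$ out band by band (line \eqref{i3}) and absorbing the cross term $\prod_{p\in P_k}(1+1/(f(p)\sqrt{p}))$ separately in \eqref{i4}. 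Your probabilistic framing is cleaner conceptually and makes the role of Chernoff transparent; the paper's computation is more hands-on but makes the quantitative dependencies explicit. Second, you take the threshold slack to be a vanishing sequence $\epsilon_N\to 0$, whereas the paper fixes $a\in(1,1/\gamma)$ once and for all. The fixed-$a$ choice is structurally simpler: it gives a Chernoff exponent $b-1-a\log b<0$ that is \emph{independent of $N$}, so every band contributes at most $\exp(-c\,\log N/(k^2\log_3 N))$ with an absolute $c>0$, and the cardinality bound $a\gamma<1$ is immediate; your $\epsilon_N$-version requires balancing $\epsilon_N$ against the number of bands and the minimum mean, which you handle correctly but which introduces avoidable delicacy. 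You should also pin down the band geometry more firmly (the paper's unit-width bands in $v$, i.e.\ $P_k=\{p:e^k\log N\log_2N<p\le e^{k+1}\log N\log_2N\}$, would serve): ``geometric with ratio $1+o(1)$'' and ``not so thin'' is too vague for the Chernoff step where you need both the number of bands and the minimum mean to be quantified against $\epsilon_N$. With those details nailed down, the proposal is a valid and somewhat more conceptual rendering of the paper's proof.
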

Indeed, since the set $\mathcal{M}$ in Lemma~\ref{lem2} is divisor closed, we infer from these two lemmas that
\[ \frac{1}{\sum_{i\in\N}f(i)^2}\sum_{m,n\in\mathcal{M}}f(m)f(n) \frac{\gcd(m,n)}{\sqrt{mn}} \ge (1+o(1))A_N,\quad N\to\infty, \]
and hence Theorem~\ref{gcdexample} has been established.
\begin{proof}[Proof of Lemma~\ref{lem1}]
Since $f(p)<(\log_3N)^{-1/2}$ for all $p$ in $P$, it follows from \eqref{AN} that
\[
A_N=\prod_{p\in P}\frac{1+f(p)^2+f(p)p^{-1/2}}{1+f(p)^2}=\exp\left((1+o(1))\sum_{p\in P}\frac{f(p)}{\sqrt{p}}\right).
\]
Now the conclusion of the lemma is obtained from the following computation:
\begin{align*}
\sum_{p\in P}\frac{f(p)}{\sqrt{p}} & =\sqrt{\frac{\log N \log_2N}{\log_3 N}}\sum_{p\in P}\frac{1}{p(\log p-\log_2N-\log_3N)} \\
& =(1+o(1))\sqrt{\frac{\log N \log_2N}{\log_3 N}}\int_{e\log N\log_2 N}^{\log N\exp( (\log_2 N)^{\gamma})\log_2 N}\frac{1}{x\log x(\log x-\log_2N-\log_3N)}dx \\
& = (1+o(1))\sqrt{\frac{\log N \log_2N}{\log_3 N}}\int_{1+\log_2N+\log_3N}^{\log_2N+(\log_2N)^{\gamma}+\log_3N}\frac{1}{t(t-\log_2N-\log_3N)}dt \\
& =(\gamma+o(1))\sqrt{\frac{\log N \log_3 N}{\log_2 N}}.
\end{align*}
\end{proof}
\begin{proof}[Proof of Lemma~\ref{lem2}]
To prove the lemma, we introduce some new notation.
Let $P_k$ be the set of all primes $p$ such that $e^k\log N\log_2N<p\le e^{k+1}\log N\log_2N$, $k=1,\ldots,[(\log_2 N)^{\gamma}]$.
Fix $1 < a < 1/\gamma$. Then let $M_k$ be the set of integers that have at least $\frac{a\log N}{k^2\log_3N}$ prime divisors in $P_k$,
and let $M'_k$ be the set of integers from $M_k$ that have prime divisors only in $P_k$.
Finally, set 
\[ \mathcal{M}:=\supp(f)\setminus\bigcup_{k=1}^{[(\log_2N)^{\gamma}]}M_k.\] In other words,
$\mathcal{M}$ is the set of square-free numbers $n$ that have at most $\frac{a\log N}{k^2\log_3N}$
divisors in each group $P_k$. It is is clear that $\mathcal{M}$ is divisor closed. 

We now estimate the cardinality of $\mathcal{M}$.
To this end, by the bounds
\[ \sqrt{2\pi} \nu^{\nu+1/2}e^{-\nu} \le \nu! \le e \nu^{\nu+1/2} e^{-\nu}, \] 
valid for all positive integers $\nu$, we see that for $m$ large enough we have
\[
\binom{m}{n}\le\frac{m^{m+1}}{n^n(m-n)^{m-n}}
\]
and hence
\begin{equation} \label{bin1} 
\binom{m}{n}\le m\left(1+\frac{n}{m-n}\right)^{m-n}\left(\frac{m}{n}\right)^n\le\exp\left(n(\log m-\log n)+n+\log m\right).
\end{equation}
We will also use the fact that 
\begin{equation} \label{bin2}
\frac{\binom{m}{n}}{\binom{m}{n-1}}=\frac{m-n+1}{n} \ge 2 
\end{equation}
when $m\ge 3n-1$.
By the prime number theorem, the cardinality of each $P_k$ is at most $e^{k+1}\log N$, and we therefore get, using first
\eqref{bin2} and then \eqref{bin1},
%, ignoring that $e^{k+1}\log N$ and $\frac{a\log N}{k^2\log_3N}$ need not be integers, 
that
\begin{align*}
|\mathcal{M}| & \le\prod_{k=1}^{[(\log_2N)^{\gamma}]}\sum_{j=0}^{\big[\frac{a\log N}{k^2\log_3N}\big]} \binom{\big[e^{k+1}\log N\big]}{j} 
  \le \prod_{k=1}^{[(\log_2N)^{\gamma}]} 2 \binom{\big[e^{k+1}\log N\big]}{\big[\frac{a\log N}{k^2\log_3N}\big]} \\
& \le\exp\left(\sum_{k=1}^{[(\log_2N)^{\gamma}]}\left(1+\frac{a \log N}{k^2\log_3N}\big(k+2+\log_4N+2\log k\big)+k+1+\log_2N\right)\right)\le N
\end{align*}
when $N$ is large enough, since $a\gamma<1$. 
%This obviously remains valid if we adjust the computation to take into account that $e^{k+1}\log N$ and $\frac{a\log N}%{k^2\log_3N}$ are not in general integers.  

To prove~\eqref{i2}, we begin by noting that
\begin{equation}
\frac{1}{A_N\sum_{i\in\N}f(i)^2}\sum_{n\in\N,n\not\in\mathcal{M}}\frac{f(n)}{\sqrt{n}}\sum_{d|n}f(d)\sqrt{d}\le
\label{i2.5}
\frac{1}{A_N\sum_{i\in\N}f(i)^2}\sum_{k=1}^{[(\log_2N)^{\gamma}]}\sum_{n\in M_k}\frac{f(n)}{\sqrt{n}}\sum_{d|n}f(d)\sqrt{d}.
\end{equation}
Now for each $k=1,\ldots,[(\log_2N)^{\gamma}]$ we have that
\begin{align}
\frac{1}{A_N\sum_{i\in\N}f(i)^2}\sum_{n\in M_k}\frac{f(n)}{\sqrt{n}}\sum_{d|n}f(d)\sqrt{d}& =\frac{1}{\prod_{p\in P_k}(1+f(p)^2+f(p)p^{-1/2})}\sum_{n\in M'_k}\frac{f(n)}{\sqrt{n}}\sum_{d|n}f(d)\sqrt{d} \nonumber \\
\label{i3}
& \le\frac{1}{\prod_{p\in P_k}(1+f(p)^2)}\sum_{n\in M'_k}f(n)^2\prod_{p\in P_k}\left(1+\frac{1}{f(p)\sqrt{p}}\right).
\end{align}
To deal with the product to the right in \eqref{i3}, we make the following computation:
\begin{align}
\prod_{p\in P_k}\left(1+\frac{1}{f(p)\sqrt{p}}\right)& =\prod_{e^k\log N\log_2N<p\le e^{k+1}\log N\log_2N}\left(1+(\log p-\log_2N-\log_3N)\sqrt{\frac{\log_3N }{\log N\log_2 N}}\right) \nonumber
\\
\nonumber
& \le\left(1+(k+1)\sqrt{\frac{\log_3N }{\log N\log_2 N}}\right)^{e^{k+1}\log N}
\le \exp\left((k+1)e^{k+1} \sqrt{\frac{\log N \log_3N }{\log_2 N}} \right)\\
 & =\exp\left(o\left(\frac{\log N}{\log_3N}\right)\frac{1}{k^2}\right), \label{i4}
\end{align}
where the latter relation holds simply because $k\le (\log_2 N)^{\gamma}$. Since every number in $M_k'$ has at least
$\frac{a\log N}{k^2 \log_3 N}$ prime divisors and $f(n)$ is a multiplicative function, it therefore follows that
$$
\sum_{n\in M'_k}f(n)^2\le   b^{-a\frac{\log N}{k^2\log_3N}}\prod_{p\in P_k}(1+bf(p)^2)
$$
whenever $b> 1$ and hence
\begin{equation}
\label{i5}
\frac{1}{\prod_{p\in P_k}(1+f(p)^2)}\sum_{n\in M'_k}f(n)^2\le b^{-a\frac{\log N}{k^2\log_3N}}\exp\left(\sum_{p\in P_k}(b-1) f(p)^2\right).
\end{equation}
Finally,
\begin{align*}
\sum_{p\in P_k} f(p)^2& =\frac{\log N \log_2N}{\log_3 N}\sum_{p\in P_k}\frac{1}{p(\log p-\log_2N-\log_3N)^2}
\\
& \le (1+o(1))\frac{\log N \log_2N}{\log_3 N}\int_{e^k\log N\log_2 N}^{e^{k+1}\log N\log_2 N}\frac{1}{k^2x\log x}dx \\
& \le (1+o(1))\frac{\log N}{k^2\log_3N}.
\end{align*}
Combining the last inequality with~\eqref{i5} and~\eqref{i4}, we get that~\eqref{i3} is at most 
$$
\exp\left((b-1-a \log b +o(1))\frac{\log N}{k^2\log_3N}\right).
$$
Choosing $b$ sufficiently close to 1, we obtain $b-1-a\log b <0$. Returning to~\eqref{i2.5}, we therefore see that the desired relation~\eqref{i2} has been established.
\end{proof}

We close this section by showing that Theorem~\ref{gcdexample} implies that \eqref{gcdb} fails for $A<1$.
\begin{corollary}
For every $\gamma$, $0<\gamma<1$ and any given positive number $R$, there exists an integer $N\ge R$ and associated integers $1\le n_1<\cdots < n_N$ such that 
\begin{equation}\label{gcdl}
\sum_{k,\ell=1}^N\frac{\gcd(n_k,n_{\ell})}{\sqrt{n_k
n_{\ell}}} > N \exp\left(\gamma\sqrt{\frac{\log N \log\log\log N}{\log\log N}}\right).
\end{equation}
\end{corollary}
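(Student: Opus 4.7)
The plan is to deduce the corollary from Theorem~\ref{gcdexample} by a ``prime blow-up'': replace each integer $n_k$ supplied by the theorem with several distinct multiples of the form $n_k q$, where $q$ runs over sufficiently large primes coprime to every $n_\ell$. The number of multiples assigned to $n_k$ is chosen so that, in the resulting unweighted GCD sum over the enlarged set, the ``effective weights'' become essentially proportional to the $c_k$'s. The weighted lower bound of Theorem~\ref{gcdexample} then translates, up to controllable errors, into an unweighted lower bound over a larger set of distinct integers.

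Concretely, I would fix $\gamma'$ with $\gamma < \gamma' < 1$ and apply Theorem~\ref{gcdexample} to produce, for a sufficiently large integer $N_0$, integers $n_1 < \cdots < n_{N_0}$ and positive weights $c_1, \dots, c_{N_0}$ (coming from the construction of Section~\ref{secttwo}, so $c_k = f(n_k)$) satisfying the weighted inequality with exponent $\gamma'$. Next, I would pick a parameter $Q$ strictly larger than every prime in $P$, and for each $k$ assign $m_k := \lfloor c_k \sqrt{Q}\rfloor$ distinct primes from $[Q,2Q]$, the primes allocated to different $n_k$'s being disjoint; by the Prime Number Theorem this is feasible whenever $M := \sum_k m_k \le Q/\log Q$. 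Let $S = \{n_k q_{k,j}\}$ be the resulting set of $M$ distinct integers.

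The crucial arithmetic identity is that, for $(k,j) \ne (\ell, i)$, the primes $q_{k,j}$ and $q_{\ell,i}$ are distinct and coprime to every $n_\mu$, whence $\gcd(n_k q_{k,j}, n_\ell q_{\ell,i}) = \gcd(n_k, n_\ell)$. Setting $a_k := \sum_j 1/\sqrt{q_{k,j}}$ (so that $a_k \approx c_k$ by the choice of $m_k$ and the fact that the primes are of size $\approx Q$), a direct expansion gives
\[
\sum_{s, t \in S} \frac{\gcd(s,t)}{\sqrt{s\,t}} \;=\; M + \sum_{k,\ell} a_k a_\ell\,\frac{\gcd(n_k, n_\ell)}{\sqrt{n_k n_\ell}} - \sum_{k,j}\frac{1}{q_{k,j}}.
\]
The middle term inherits the lower bound $(\sum c_k^2)\exp(\gamma' \sqrt{\log N_0 \log_3 N_0/\log_2 N_0})$ from Theorem~\ref{gcdexample}, while the last term is $O(M/Q)$. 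Dividing through by $M$ and comparing $\sqrt{\log M \log_3 M / \log_2 M}$ against $\sqrt{\log N_0 \log_3 N_0/\log_2 N_0}$ yields the required inequality, and $M \ge R$ is achieved by taking $N_0$ large.

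The hard part is the delicate arithmetic balancing needed in the final comparison: $Q$ must be large enough to supply $M$ distinct primes exceeding the support of $f$, yet small enough that $M \le N_0^{K_0}$ with $K_0 < (\gamma'/\gamma)^2$, so that $\sqrt{\log M \log_3 M/\log_2 M}$ stays strictly below $(\gamma'/\gamma)\sqrt{\log N_0 \log_3 N_0/\log_2 N_0}$; in addition, the rounding losses from $m_k = \lfloor c_k \sqrt Q\rfloor$ (which kills very small $c_k$) must be absorbed. Verifying compatibility of these constraints relies on the explicit form $c_k = f(n_k)$ and the Euler-product estimates $\sum c_k^2 = \prod_p(1 + f(p)^2)$, $\sum c_k = \prod_p(1 + f(p))$, together with the bounds on $\sum_p f(p)$ and $\sum_p f(p)^2$ from Section~\ref{secttwo}.
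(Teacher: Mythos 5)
Your approach is genuinely different from the paper's, but it has a fatal quantitative gap. The paper deduces the Corollary from Theorem~\ref{gcdexample} by invoking the duality inequality $\Lambda(N)\le (e^2+1)(\log N+2)\max_{n\le N}\Gamma(n)$ from \cite[Theorem~5]{ABS}, which converts the weighted Rayleigh-quotient bound into an unweighted bound at the cost of only a logarithmic factor; combining this with the upper bound \eqref{gcdb} pins the relevant $n$ into the window $[N^\delta,N]$. Your prime blow-up attempts to bypass this duality lemma by encoding the weights $c_k=f(n_k)$ as multiplicities, but the accounting does not close.

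The precise obstruction is the normalization by $M$. Your identity forces $a_k\asymp m_k/\sqrt{Q}$ and $M\asymp \sqrt{Q}\sum_k c_k$, and the prime-counting constraint $M\lesssim Q/\log Q$ then forces $\sqrt{Q}\gtrsim (\sum_k c_k)\log Q$, hence $M\gtrsim(\sum_k c_k)^2\log Q$. Dividing the cross term by $M$ therefore yields (after normalizing $\sum c_k^2=1$)
\[
\frac{1}{M}\sum_{k,\ell}a_ka_\ell\frac{\gcd(n_k,n_\ell)}{\sqrt{n_kn_\ell}}
\;\lesssim\;
\frac{1}{(\sum_k c_k)^2\log Q}\,\exp\!\left(\gamma'\sqrt{\tfrac{\log N_0\log_3 N_0}{\log_2 N_0}}\right).
\]
For the construction in Section~\ref{secttwo} the weights $c_k=f(n_k)$ span a huge range: a careful estimate of the truncated sum (dominated by integers with $\asymp a\log N_0/(k^2\log_3 N_0)$ factors in each $P_k$) gives $\log\sum_k c_k \sim \tfrac{a\gamma}{2}\log N_0$, so $(\sum c_k)^2\asymp N_0^{a\gamma+o(1)}$. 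Meanwhile, the resonance gain $\exp(\gamma'\sqrt{\log N_0\log_3 N_0/\log_2 N_0})=N_0^{o(1)}$. The polynomial loss $N_0^{-a\gamma}$ thus swamps the sub-polynomial gain, and the ratio degenerates to $1+o(1)$, far short of $\exp(\gamma\sqrt{\log M\log_3 M/\log_2 M})$. The constraint you flag as ``the hard part''---keeping $M\le N_0^{K_0}$ with $K_0<(\gamma'/\gamma)^2$ while still covering the weights---cannot be met, because the required relation is not between $\log M$ and $\log N_0$ (both comparable) but between the polynomial factor $(\sum c_k)^2$ and the sub-polynomial resonance gain. You would need the ratio $(\sum c_k)^2/\sum c_k^2$ to be $N_0^{o(1)}$, which is false for the weights constructed in the theorem, and more fundamentally one cannot expect a multiplicity encoding to do the job of the spectral inequality from \cite{ABS} without incurring a comparable loss.
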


\begin{proof} We introduce the two quantities 
\[ \Gamma(N):= \sup_{1\le n_1<\cdots < n_N} N^{-1} \sum_{k,{\ell}=1}^N\frac{\gcd(n_k,n_{\ell})}{\sqrt{n_k n_{\ell}}} \]
and
\[ \Lambda(N):= \sup_{1\le n_1<\cdots < n_N} \sup_{(c_1,..., c_N)\neq 0}
\frac{\sum_{k,{\ell}=1}^Nc_kc_{\ell}\frac{\gcd(n_k,n_{\ell})}{\sqrt{n_k
n_{\ell}}}}{\sum_{j=1}^N c_j^2}, \]
which are related by the two inequalities
\begin{equation}\label{two} \Gamma(N)\le \Lambda(N)\le (e^2+1)(\log N+2) \max_{n\le N} \Gamma(n). \end{equation}
Here the left inequality is obvious and the right inequality was established in \cite[Theorem 5]{ABS}.
It now follows from Theorem~\ref{gcdexample}, the right inequality in \eqref{two}, and \eqref{gcdb} that for every $\gamma$, $0<\gamma < 1$, there exists an absolute constant $\delta$, $0<\delta < 1$, such that for every sufficiently large $N$ there is an $n$ in $[N^\delta, N]$ for which
\[ \exp\left(\gamma\sqrt{\frac{\log N \log_3 N}{\log_2 N}}\right) \le \Gamma(n). \]
This shows that whenever $0<\gamma<1$, there must exist arbitrarily large $N$ and associated integers $1\le n_1< \cdots < n_N$ such that \eqref{gcdl} holds.
\end{proof}

Note that if we knew that $N\mapsto \Gamma(N)$ is an increasing function, then we could immediately have made the stronger conclusion that
$ \exp\Big(\gamma \sqrt{\log N \log_3 N/\log_2 N}\Big) \le \Gamma(N)$ for all sufficiently large $N$. This inequality does indeed hold, but its proof requires additional technicalities which we choose not to supply since they are not needed for the proof of Theorem~\ref{extreme}.  

\section{Proof of Theorem~\ref{extreme}}\label{proofR}

Following Soundararajan's method, we seek a Dirichlet polynomial
\[ R(t)=\sum_{m\in \mathcal{M}'} r(m) m^{-it} \]
with $|\mathcal{M}'| \le T^{\kappa}$ for some $\kappa\le 1/2$ and positive coefficients $r(n)$ that ``resonates''
with $\zeta(1/2+i t)$. As smoothing function we will use the Gaussian $\Phi(t):=e^{-t^2/2}$, and we define
\begin{align*} M_1(R,T)& :=\int_{T^{\beta}\le |t|\le T} |R(t)|^2 \Phi\Big(\frac{\log T}{T} t\Big) dt, \\
M_2(R,T)& :=\int_{T^{\beta}\le |t| \le T}  \zeta(1/2+it) |R(t)|^2 \Phi\Big(\frac{\log T}{T} t\Big) dt.
\end{align*}
Then 
\begin{equation} \label{plain} \max_{T^{\beta}\le t \le T} \big|\zeta(1/2+it)\big| \ge \frac{|M_2(R,T)|}{M_1(R,T)}, \end{equation}
and the goal is therefore to maximize the ratio on the right-hand side of \eqref{plain}. Our particular choice of smoothing function is of course not important; the properties that we will need, are that the Fourier transform $\widehat{\Phi}$ of $\Phi$ is positive and that both $\Phi$ and $\widehat{\Phi}$ decay fast.

Before going further, we would like to explain the main new idea of Aistleitner's proof \cite{A} which we will adapt to Soundararajan's method. The purpose of the resonator is to pick out terms in the approximating sum $\sum_{n\le T} n^{-1/2-it}$ that contribute substantially to the average size of $\zeta(1/2+it)$. Since we integrate $\zeta(1/2+it)$ against $|R(t)|^2$, the frequencies that pick out large contributions are of the form $\log (m/n)$ with $m, n$ in $\mathcal{M}'$. This means that what really matters is the size of the ratios $m/n$ rather than that of the integers $m$ and $n$ themselves. We have therefore, following Aistleitner, abandoned the restriction from Soundararajan's approach that the resonator $R$ have the same length as the approximating sum $\sum_{n\le T}n^{-1/2-it}$. Our proof will reveal the interesting point that the terms in the latter sum corresponding to the integers from $\mathcal{M}\cap [1,T]$ give a larger contribution than those picked by Soundararajan's resonator. 
There are however nontrivial technical difficulties associated with a resonator that is much longer than the approximating sum $\sum_{n\le T} n^{-1/2-it}$, as discussed in detail in \cite{A}. We will now explain how such a Dirichlet polynomial $R(t)$ can be chosen to overcome these obstacles.   

We retain the notation from the preceding section with $N=[T^{\kappa}]$, choosing $0<\gamma<1$ such that $c<\gamma \sqrt{\min(1/2, 1-\beta)}$. Following an idea from \cite{A}, we let $\mathcal{J}$ be the set of integers $j$ such that
\[ \Big[(1+T^{-1})^j,(1+T^{-1})^{j+1}\Big)\bigcap \mathcal{M} \neq \emptyset,  \]
and let $m_j$ be the minimum of  $\big[(1+T^{-1})^j,(1+T^{-1})^{j+1}\big)\bigcap \mathcal{M}$ for $j$ in $\mathcal{J}$. Then set
\[ \mathcal{M}':= \big \{ m_j: \ j\in \mathcal{J} \big\}\]
and 
\[ r(m):= \left(\sum_{n\in \mathcal{M}, 1-T^{-1}(\log T)^2 \le n/m \le 1+T^{-1}(\log T)^2} f(n)^2\right)^{1/2} \] 
for every $m$ in $\mathcal{M}'$. Note that plainly $|\mathcal{M}'|\le |\mathcal{M}| \le N$. 

The proof of Theorem~\ref{extreme} will also require two additional estimates which we state as separate lemmas. The first is related to the quantity $A_N$ defined in \eqref{an}.

\begin{lemma}
\label{lem3}
Let $\mathcal{M}$ be the set constructed in the proof of Lemma~\ref{lem2} and $\varepsilon$ be a positive number. Then
\begin{equation}
\label{iii}
\frac{1}{\sum_{i\in\N}f(i)^2}\sum_{n\in\mathcal{M}}\frac{f(n)}{\sqrt{n}}\sum_{d|n,\,d\le n/N^{\varepsilon}}f(d)\sqrt{d}=o(A_N),\quad N\to\infty,
\end{equation}
where the implicit constant only depends on $\varepsilon$.
\end{lemma}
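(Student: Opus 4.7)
The plan is to use the multiplicativity of $f$ to reduce~\eqref{iii} to a tail sum, which is then controlled by Rankin's trick.

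Since $f$ is supported on squarefree integers, every $n$ in $\supp(f)$ factors uniquely as $n=dm$ with $\gcd(d,m)=1$, and a direct computation gives $\frac{f(n)}{\sqrt{n}}f(d)\sqrt{d}=f(d)^{2}\cdot\frac{f(m)}{\sqrt{m}}$. The constraint $d\le n/N^{\varepsilon}$ is then the same as $m\ge N^{\varepsilon}$. Dropping the condition $n\in\mathcal{M}$ (all terms being non-negative) and dropping the coprimality requirement on $(d,m)$ in the resulting double sum, the left-hand side of~\eqref{iii} is bounded above by
\[
\frac{1}{\sum_{i\in\N}f(i)^{2}}\Big(\sum_{d\in\N}f(d)^{2}\Big)\sum_{m\ge N^{\varepsilon}}\frac{f(m)}{\sqrt{m}}=\sum_{m\ge N^{\varepsilon}}\frac{f(m)}{\sqrt{m}}.
\]

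Next I apply Rankin's trick: for every $s>0$,
\[
\sum_{m\ge N^{\varepsilon}}\frac{f(m)}{\sqrt{m}}\le N^{-\varepsilon s}\sum_{m\in\N}\frac{f(m)\,m^{s}}{\sqrt{m}}=N^{-\varepsilon s}\prod_{p\in P}\Big(1+\frac{f(p)p^{s}}{\sqrt{p}}\Big).
\]
For every $p$ in $P$ we have $\log p\le (1+o(1))\log_2 N$, so the choice $s=1/(2\log_2 N)$ makes $p^{s}\le e$ uniformly. The Euler product is therefore at most $\exp\bigl(e\sum_{p\in P}f(p)/\sqrt{p}\bigr)$, and by the calculation carried out in the proof of Lemma~\ref{lem1} this is $\exp\bigl(O(\sqrt{\log N\log_3 N/\log_2 N})\bigr)$. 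Since $N^{-\varepsilon s}=\exp\bigl(-\varepsilon\log N/(2\log_2 N)\bigr)$, we obtain
\[
\sum_{m\ge N^{\varepsilon}}\frac{f(m)}{\sqrt{m}}\le \exp\Big(-\frac{\varepsilon\log N}{2\log_2 N}+O\Big(\sqrt{\frac{\log N\log_3 N}{\log_2 N}}\Big)\Big).
\]

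The key observation closing the argument is that the negative contribution dominates: the ratio $(\log N/\log_2 N)/\sqrt{\log N\log_3 N/\log_2 N}=\sqrt{\log N/(\log_2 N\log_3 N)}$ tends to $\infty$ with $N$, so the tail above in fact tends to zero. Since $A_{N}\to\infty$ by Lemma~\ref{lem1}, the desired relation~\eqref{iii} follows. There is no real obstacle in this plan; the only delicate point is the calibration of $s$, which must be small enough that $p^{s}$ remains $O(1)$ on $P$ yet large enough that $N^{-\varepsilon s}$ beats the Euler-product growth—the window $s\asymp 1/\log_2 N$ comfortably achieves both.
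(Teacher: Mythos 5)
Your argument is correct, but it takes a genuinely different route from the paper's. Both proofs begin with the same algebraic step: write $n=dm$ with $\gcd(d,m)=1$ and observe that $\frac{f(n)}{\sqrt n}f(d)\sqrt d=f(d)^2\frac{f(m)}{\sqrt m}$ with the size constraint translating into $m\ge N^\varepsilon$. From there the paper keeps the divisibility constraint $m\mid n$ and the membership $n\in\mathcal M$, reducing to showing that $\sum_{k\mid n,\,k>N^\varepsilon}\frac{1}{f(k)\sqrt k}=o(1)$ uniformly for each $n\in\mathcal M$; this is done by Rankin's trick with the fixed exponent $1/4$, and crucially uses the structure of $\mathcal M$ (each $n\in\mathcal M$ has at most $2a\log N/\log_3 N$ prime divisors) to bound the resulting product $\prod_{p\mid n}(1+1/(p^{1/4}f(p)))$. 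You instead decouple the two sums entirely, dropping the conditions $n\in\mathcal M$ and $\gcd(d,m)=1$, so the problem reduces to a tail estimate for the unconstrained series $\sum_{m\ge N^\varepsilon}f(m)/\sqrt m$; you then apply Rankin with the finer calibration $s\asymp 1/\log_2 N$ directly to the full Euler product over $P$, invoking the computation in Lemma~\ref{lem1}. Your version is cleaner in that it avoids any appeal to the prime-divisor bound of $\mathcal M$ and in fact proves the stronger statement in which $\mathcal M$ is replaced by all of $\supp(f)$; the paper's version is more local (bounding the inner sum term-by-term in $n$) and leans on the combinatorial definition of $\mathcal M$. Both are valid, and both ultimately produce a bound of the form $o(1)\cdot\sum f(n)^2$, which is $o(A_N)\cdot\sum f(i)^2$ since $A_N\to\infty$ by Lemma~\ref{lem1}.
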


\begin{proof}
We have 
\[
\sum_{n\in\mathcal{M}}\frac{f(n)}{\sqrt{n}}\sum_{d|n,\,d\le n/N^\varepsilon}f(d)\sqrt{d}=
\sum_{n\in\mathcal{M}}f(n)^2\sum_{k|n,\,k\ge N^\varepsilon}\frac{1}{f(k)\sqrt{k}}.
\]
It is therefore enough to show that for each $n$ in $\mathcal{M}$ we have
\[
\sum_{k|n,\,k>N^{\varepsilon}}\frac{1}{f(k)\sqrt{k}}=o(1),\quad N\to\infty.
\]
Finally, we obtain
\[
\sum_{k|n,\,k>N^{\varepsilon}}\frac{1}{f(k)\sqrt{k}}
\le N^{-\varepsilon/4}\sum_{k|n}\frac{1}{f(k)k^{1/4}} = N^{-\varepsilon/4}\prod_{ p|n}\left(1+\frac{1}{p^{1/4} f(p)}\right)=o(1)
\]
as required. The last identity is clear because $1/(p^{1/4} f(p))=o(1)$ uniformly for all $p$ in $P$ and the integer $n$ has at most  $2a\log N/\log_3 N$ prime divisors.
\end{proof}

Our second estimate deals with the action of the resonator in the interval $|t|\le T^{\beta}$.

\begin{lemma}
\label{lem4}
For an arbitrary positive number $M$, we have
\[
\Big| \sum_{1\le n\le T}n^{-1/2}\int_{-T^\beta}^{T^{\beta}}\left(\frac{M}{n}\right)^{it}\Phi\Big(\frac{\log T}{T} t\Big) dt\Big|
\ll\max(T^{\beta},T^{1/2}\log T),
\]
where the implicit constant is independent of $M$.
\end{lemma}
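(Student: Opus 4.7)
The plan is to estimate each integral
\[
I_n(M):=\int_{-T^{\beta}}^{T^{\beta}}\Big(\frac{M}{n}\Big)^{it}\Phi\Big(\frac{\log T}{T}t\Big)\,dt
\]
by two bounds and then sum the results over $n$. The trivial bound is $|I_n(M)|\le 2T^{\beta}$. When $\alpha:=\log(M/n)\neq 0$, one integration by parts in $e^{it\alpha}$ yields a boundary contribution of size at most $2/|\alpha|$ (using $|\Phi|\le 1$) and an interior contribution bounded, after the substitution $u=(\log T/T)t$, by $|\alpha|^{-1}\int_{\R}|\Phi'(u)|\,du$. Since $\Phi(u)=e^{-u^{2}/2}$ has a convergent derivative integral, both contributions are $O(1/|\alpha|)$ with an absolute constant, giving
\[
|I_n(M)|\ll\min\bigl(T^{\beta},\ 1/|\log(M/n)|\bigr)
\]
uniformly in $M$ and $n$.

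I would then split $\sum_{n\le T}n^{-1/2}|I_n(M)|$ according to the size of $|\log(M/n)|$. In the \emph{close range} $|\log(M/n)|<T^{-\beta}$ one uses the trivial bound: the relevant $n$ lie in a multiplicative window around $M$ of width $\asymp MT^{-\beta}$, so there are $O(1+MT^{-\beta})$ such integers; since $n\asymp M$ one has $n^{-1/2}\ll M^{-1/2}$, producing a contribution $\ll M^{-1/2}T^{\beta}+M^{1/2}$. A nonempty close range forces $M\ll T$, so this total is $\ll\max(T^{\beta},T^{1/2})$. In the \emph{far range} $|\log(M/n)|\ge T^{-\beta}$ I would use the integration-by-parts bound and subdivide further: for $n\in(M/2,2M)$ with $|n-M|\ge MT^{-\beta}$ one has $|\log(M/n)|\asymp|n-M|/M$, and summing $M/|n-M|$ over integers in that window produces a factor of order $\log T^{\beta}\ll\log T$, for a total $\ll M^{1/2}\log T\ll T^{1/2}\log T$; for $n\notin(M/2,2M)$ one has $|\log(M/n)|\gg 1$ and the contribution is bounded trivially by $\sum_{n\le T}n^{-1/2}\ll T^{1/2}$.

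Adding the two ranges gives the desired bound $\ll\max(T^{\beta},T^{1/2}\log T)$. The extreme regimes $M\ll 1$ and $M\gg T$ must be handled separately by observing that then every $n\le T$ satisfies $|\log(M/n)|\gg 1$, so only the non-resonant subcase of the far range contributes and the total is already $\ll T^{1/2}$. The main subtlety is to check that the integration-by-parts step produces no extra $\log T$ from the boundary at $\pm T^{\beta}$; this is automatic because $\beta<1$ implies $T^{\beta-1}\log T\to 0$, so $\Phi$ is essentially equal to $1$ on the interval of integration and both the boundary and derivative contributions collapse to absolute constants divided by $|\alpha|$.
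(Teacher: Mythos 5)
Your proof is correct and takes essentially the same approach as the paper: derive the oscillatory-integral bound $\ll\min\bigl(T^{\beta},1/|\log(M/n)|\bigr)$ and then split the $n$-sum according to the distance of $n$ from $M$. The paper obtains the integral bound by citing Titchmarsh's first-derivative test rather than by a direct integration by parts, and places its close-range cutoff at $|n-M|\le M^{1/2}$ with annuli $S_m$ of width $M^{1/2}$ instead of at the natural crossover $|\log(M/n)|=T^{-\beta}$, but the decomposition and the resulting $\max(T^{\beta},T^{1/2}\log T)$ estimate are the same.
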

\begin{proof}
We begin by noting that 
\begin{equation}\label{alter} \big |\int_{-T^\beta}^{T^{\beta}}e^{i\lambda t}\Phi\Big(\frac{\log T}{T} t\Big)dt\big |\ll\min(T^{\beta},\frac{1}{|\lambda|}). \end{equation}
This follows from \cite[Lemma 4.3]{T}
if we set $F(t):=\lambda t$ and $G(t):=\Phi\Big(\frac{\log T}{T} t\Big)$ on the intervals $[-T^{\beta},0]$ and $[0,T^\beta]$.
We infer from \eqref{alter} that
\begin{equation} \label{starting}
\Big| \sum_{1\le n\le T}n^{-1/2}\int_{-T^\beta}^{T^{\beta}}\left(\frac{M}{n}\right)^{it}\Phi\Big(\frac{\log T}{T} t\Big) dt\Big| \ll\sum_{1\le n\le T}n^{-1/2}\min\left(T^{\beta},\frac{1}{\big|\log\frac{M}{ n}\big|}\right).
\end{equation}
For $M-M^{1/2}\le n\le M+M^{1/2}$, we use the bound $T^\beta$ for the minimum to the right, and for $n$ outside the interval $[\frac{M}{2},\frac{3M}{2}]$, we use that this minimum is $\ll 1$. Setting
\[
S_m:=[M-(m+1)M^{1/2},M-mM^{1/2}]\cup[M+mM^{1/2},M+(m+1)M^{1/2}],
\]
we therefore find that 
\[
\sum_{1\le n\le T}n^{-1/2}\min\left(T^{\beta},\frac{1}{\big|\log\frac{M}{ n}\big|}\right) 
\ll T^{\beta}+T^{1/2}+\sum_{1\le m \le \frac{M^{1/2}}{2}}\sum_{n\in S_m, n\le T}n^{-1/2}\frac{1}{\big|\log\frac{M}{ n}\big|}. \]
We may clearly assume that $M\le 2T$ since otherwise the latter sum is $0$. Using that
\[ 
\frac{1}{|\log\frac{M}{n}|}\ll \frac{M^{1/2}}{m}
\]
for $n$ in $S_m$, we therefore see that 
\[ \sum_{1\le m \le \frac{M^{1/2}}{2}}\sum_{n\in S_m}n^{-1/2}\frac{1}{\big|\log\frac{M}{ n}\big|} \ll \sum_{1\le m\le \frac{M^{1/2}}{2}} \frac{M^{1/2}}{m}\ll M^{1/2} \log M \le T^{1/2} \log T. \] Returning to \eqref{starting}, we obtain the assertion of the lemma.
\end{proof}

With the resonator and Lemma~\ref{lem3} and Lemma~\ref{lem4} in place, we now turn to the estimates for $M_1(R,T)$ and $M_2(R,T)$.
\begin{proof}[Proof of Theorem~\ref{extreme}] 
We begin by finding an upper bound for $M_1(R,T)$:
\begin{equation}\label{m1} M_1(R,T)\le \int_{-\infty}^\infty |R(t)|^2 \Phi\Big(\frac{\log T}{T} t\Big) dt=\frac{\sqrt{2\pi} T}{\log T} \sum_{m,n\in \mathcal{M}'} r(m)r(n) 
\Phi\Big(\frac{T}{\log T} \log \frac{m}{n} \Big) \end{equation}
since
\[ \widehat{\Phi}(x):=\int_{-\infty}^\infty \Phi(t)e^{-itx} dt= \sqrt{2\pi} \Phi(x).  \]
We find first that
\begin{align*} \sum_{m,n\in \mathcal{M'}, |m/n-1| > T^{-1} (\log T)^2} r(m) r(n) \Phi\Big(\frac{T}{\log T} \log \frac{m}{n} \Big)
& \ll N \Phi(\log T) \sum_{m\in \mathcal{M}'} r(m)^2 \\
& \ll N \Phi(\log T) (\log T)^2 \sum_{n\in \mathcal{M}} f(n)^2 \\
& =o(1) \sum_{n\in \mathcal{M}} f(n)^2, \end{align*}
where we used the Cauchy--Schwarz inequality, the definition of $r(m)$, and finally the rapid decay of $\Phi(t)$. 
Since 
\begin{align*} \sum_{m,n\in \mathcal{M}', |m/n-1|\le T^{-1}(\log T)^2}r(m) r(n)  
 & \ll  \sum_{j\in \mathcal{J}} r(m_j) \sum_{|j-k|\le (\log T)^2 } r(m_k) \\
&  \ll (\log T)^4 \sum_{n\in \mathcal{M}} f(n)^2, \end{align*}
we may therefore return to \eqref{m1} and conclude that
\begin{equation} \label{m1f} M_1(R,T) \ll T (\log T)^3 \sum_{n\in \mathcal{M}} f(n)^2. \end{equation} 

We next turn to the lower bound for $M_2(R,T)$. We use the classical approximation 
\begin{equation}\label{approx} 
 \zeta(1/2+i t)= \sum_{n\le T} n^{-1/2-it} - \frac{T^{1/2-it}}{1/2-it}+O(T^{-1/2}), \end{equation}
which is valid for $|t|\le T$ (see \cite[Theorem 4.11]{T}). Hence, using also the trivial estimate $|R(t)|^2 \le N \sum_{m\in \mathcal{M}'}r(m)^2$, we find that
\[ M_2(R,T)= \int_{T^{\beta}\le |t| \le T} \sum_{n\le T}n^{-1/2-it} |R(t)|^2 \Phi\Big(\frac{\log T}{T} t\Big) dt + O(T^{1/2+\kappa})(\log T)^3 \sum_{n\in \mathcal{M}}f(n)^2.\]
By Lemma~\ref{lem4}, we have
\begin{align*} \left|\int_{-T^\beta}^{T^{\beta}} \sum_{n\le T}n^{-1/2-it}  |R(t)|^2\Phi\Big(\frac{\log T}{T}t\Big) dt\right|&
\ll \max(T^{\beta},T^{1/2}\log T)  \sum_{m, n  \in \mathcal{M'}}r(m)r(n) \\  & \ll 
\max(T^{\beta},T^{1/2}\log T) T^\kappa (\log T)^3 \sum_{n\in \mathcal{M}}f(n)^2,\end{align*}
where we in the last step used the Cauchy--Schwarz inequality and the definition of $r(n)$.
We see that the right-hand side is $O(T(\log T)^4) \sum_{n\in \mathcal{M}}f(n)^2$ if we choose $\kappa=\min(1/2, 1-\beta)$.
Since trivially
\[  \int_{ |t| \ge T} \big|\sum_{n\le T}n^{-1/2-it}| \cdot |R(t)|^2 \Phi\Big(\frac{\log T}{T} t\Big) dt \ll 
o(1)  \sum_{n\in \mathcal{M}}f(n)^2\]
by the rapid decay of $\Phi(t)$, we therefore have
\begin{equation}\label{m2}  M_2(R,T)= I(R,T) + O(T (\log T)^4)  \sum_{n\in \mathcal{M}}f(n)^2, \end{equation}
where
\[ I(R,T):=\int_{-\infty}^{\infty}  \sum_{n\le T}n^{-1/2-it} |R(t)|^2 \Phi\Big(\frac{\log T}{T} t\Big) dt.\]
Computing as in the preceding case, we see that
\begin{align*}  I(R,T) & =\frac{\sqrt{2\pi} T}{\log T} \sum_{m,n\in \mathcal{M}'} \sum_{k\le T} \frac{r(m)r(n)}{\sqrt{k}} \Phi\Big(\frac{T}{\log T} \log \frac{km}{n} \Big) \\
& \ge \frac{\sqrt{2\pi} T}{\log T} \sum_{m,n\in \mathcal{M}'} \sum_{k\in \mathcal{M}, k\le T} \frac{r(m)r(n)}{\sqrt{k}} \Phi\Big(\frac{T}{\log T} \log \frac{km}{n}\Big). \end{align*}
%By the Cauchy--Schwarz inequality, we have
%\[ \sum_{m,n\in \mathcal{M}'} \sum_{k\in \mathcal{M}, k> T} \frac{r(m)r(n)}{\sqrt{k}}
%\le T^{-1/2} |\mathcal{M}|\cdot |\mathcal{M}'| \ \sum_{m\in \mathcal{M}'} r(m)^2, \]
%and hence we obtain, using that $|\mathcal{M}'|\le |\mathcal{M}|\le N\le T^{\kappa}$ and the assumption that $\kappa<1/4$,  that
%\begin{equation} \label{ii} I(R,T)=\frac{\sqrt{2\pi} T}{\log T} \sum_{m,n\in \mathcal{M}'} \sum_{k\in \mathcal{M}} \frac{r(m)r(n)}{\sqrt{k}} \Phi
%\Big(\frac{T}{\log T} \log \frac{km}{n} \Big) + o(T) \sum_{n\in \mathcal{M}}f(n)^2. \end{equation}
Here we used that all the terms in the sum are positive, so that we could sum over a suitable subcollection of them. We will now do this a second time in such a way that we are able to relate the sum to the quantity 
\[ A_N=\frac{1}{\sum_{i\in\N}f(i)^2}\sum_{n\in\N}\frac{f(n)}{\sqrt{n}}\sum_{d|n}f(d)\sqrt{d}\] 
which was defined in \eqref{an}. To this end, we apply the following rule: For a given $k$ in $\mathcal{M}$, we consider all pairs $m',n'$ in $\mathcal{M}'$ such that $ |km'/n' -1|\le 3/T$. It follows from the Cauchy--Schwarz inequality that
\[ \sum_{m,n\in \mathcal{M}, mk=n, 0 \le m/m'-1 \le1/T,  0\le n/n'-1 \le 1/T}f(m)f(n)\le r(m')r(n') \]
and hence, by the definition of $\mathcal{M}'$, that
\[ \sum_{m,n\in \mathcal{M}, mk=n}f(m)f(n)\le \sum_{m',n'\in \mathcal{M}', |km'/n' -1|\le 3/T} r(m')r(n'). \]
Now dividing this inequality by $\sqrt{k}$ and summing over all $k$ in $\mathcal{M}\cap [1,T]$, we therefore get that
\[ \label{last} I(R,T) \gg \frac{T}{\log T} %\frac{1}{\sum_{i\in\N}f(i)^2}
\sum_{n\in\mathcal{M}}\frac{f(n)}{\sqrt{n}}\sum_{d|n, d\ge n/T}f(d)\sqrt{d}. \]
In view of Lemma~\ref{lem2} and Lemma~\ref{lem3}, we see that we obtain
\[ I(R,T) \gg \frac{T}{\log T} A_N \sum_{n\in \mathcal{M}} f(n)^2;\]
here the application of Lemma~\ref{lem3} was justified because $N=[T^{\kappa}]$ which implies that we can choose $\varepsilon=2$ .
Using also Lemma~\ref{lem1} and returning to \eqref{m2}, we finally get
\begin{equation}\label{m2f} |M_2(R,T)|\gg \frac{T}{\log T}  \exp\left(\big(\gamma+o(1)\big)\sqrt{\kappa \frac{\log T \log_3 T}{\log_2 T}}\right) \sum_{n\in \mathcal{M}}f(n)^2. \end{equation}

We finish the proof by plugging \eqref{m1f} and \eqref{m2f} into \eqref{plain}. We then obtain
\[ \max_{T^{\beta}\le t \le T} \big|\zeta(1/2+it)\big| \ge  \exp\left(c \sqrt{\frac{\log T \log_3T}{\log_2 T}}\right) \] 
for all sufficiently large $T$ since we have $c<\sqrt{\kappa}$ by our choice of  $\kappa$.  \end{proof}

Note that it was used several times in the proof that the set $\mathcal{M}'$ has cardinality not exceeding $T^{\kappa}$ with $\kappa\le 1/2$. In Soundararajan's original proof, one had the same bound with $\kappa<1$ for this cardinality. In either case, it seems hard to dispense with a restriction like this. Taking into account the step leading to the crucial estimate \eqref{m2f}, we see that \eqref{gcdb} combined with the right inequality in \eqref{two} indicates that the only improvement we could hope for following Soundararajan's method is to find a slightly larger $c$ in Theorem~\ref{extreme}. 

\section*{Acknowledgements}

We are grateful to Christoph Aistleitner, Maksym Radziwi{\l\l}, and Oleksandr Rudenko for several fruitful discussions. We would also like to thank 
Maksym Radziwi{\l\l} for some pertinent comments on a preliminary version of the paper.

\end{document}